\newtheorem{theo}{Theorem}[section]
\newtheorem{definition}{Definition}[section]
\newtheorem{prop}[theo]{Proposition}
\newtheorem{lemma}[theo]{Lemma}
\newtheorem{coro}[theo]{Corollary}
\newcommand{\ZZn}{{\mathbb{Z}_{2}^{n}}}
\newcommand{\ZZd}{{\mathbb{Z}_{2}^{d}}}
\begin{document}

\begin{frontmatter}[classification=text]

\title{Unitary Signings and Induced Subgraphs \\
of Cayley Graphs of $\ZZn$} 

\author[nalon]{Noga Alon\thanks{Supported in part by NSF grant DMS-1855464, ISF grant 281/17, BSF grant 2018267
and the Simons Foundation.}}
\author[kzheng]{Kai Zheng}
\begin{abstract}
Let $G$ be the Cayley graph of the elementary abelian $2$-group
$\mathbb{Z}_2^{n}$ with respect to a set $S$ of size $d$. We
prove that
for any such $G, S$ and $d$,
the maximum degree of any induced subgraph of $G$ on any set of more
than half the
vertices is at least $\sqrt d$. This is deduced from the recent breakthrough
result of Huang who proved the above for the
$n$-hypercube $Q^n$, in which the set of generators
$S$ is the set of all vectors of Hamming weight $1$. 
Motivated by his method we define and study unitary signings of adjacency 
matrices of graphs, and compare them to the orthogonal signings of Huang.
As a byproduct, we answer a
recent question of Belardo, Cioab\u{a}, 
Koolen, and Wang about the
spectrum of signed $5$-regular graphs.
\end{abstract}
\end{frontmatter}

\section{Introduction} \label{sec: intro}
For a group $B$ and a set $S \subseteq B$ satisfying
$S=S^{-1}$, the Cayley graph of  
$B$ with a generating set $S$, denoted $\Gamma(B, S)$, is  
the graph whose set of vertices 
is the set of elements of $B$, in which the set of edges is the 
set of all (unordered) pairs $\{g, gs\}$ where $g \in B$ and $s \in S$.
Any Cayley graph is regular and vertex transitive. In this note we  
consider Cayley graphs of the elementary abelian $2$-group 
$B = \mathbb{Z}_{2}^{n}$. Here we use additive notation and note that
$S=-S$ for every subset of the group. A particular example that 
has been the subject of recent attention is the $n$-dimensional hypercube, 
$Q^{n}$. This is the Cayley graph of $B = \mathbb{Z}_{2}^{n}$ whose 
generating set $S$ consists of all length $n$ boolean vectors 
with Hamming weight $1$. A recent breakthrough 
result of Huang \cite{Hu} asserts that any 
induced subgraph $H$ of $Q^{n}$ on a set of $2^{n-1}+1$ vertices
has maximum degree $\Delta(H)$ at least $\sqrt{n}$.
This, together with a result of Gotsman and Linial \cite{GL}, 
confirmed the Sensitivity Conjecture of Nisan and Szegedy \cite{NS}.

Huang's result also improves a previous lower bound of
$(\frac{1}{2} - o(1))\log_2{n}$ of Chung, F{\"u}redi, Graham, and
Seymour \cite{CFGS}, while another result of \cite{CFGS} shows that
his $\sqrt n$ bound is tight.

Our first result extends the lower bound to any Cayley graph
of $\ZZn$. 
These graphs are sometimes called \emph{cubelike graphs}, see, for example,
\cite{MPRR}.
\begin{theo}
\label{t11}
For any Cayley graph $G=\Gamma(\mathbb{Z}_{2}^{n}, S)$
of $\mathbb{Z}_{2}^{n}$ with respect to any
generating set $S$, and for
any subset $U \subseteq \mathbb{Z}_{2}^{n}$
of cardinality $|U| > 2^{n-1}$, the maximum degree of the induced subgraph
$H$ of $G$ on $U$ satisfies $\Delta(H) \geq \sqrt{|S|}.$
\end{theo}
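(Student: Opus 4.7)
The plan is to reduce the statement directly to Huang's theorem by lifting the Cayley graph $G = \Gamma(\mathbb{Z}_2^n, S)$ to the hypercube $Q^d$ via the natural covering map. Label the generators $S = \{s_1, \ldots, s_d\}$ and define a group homomorphism $\varphi \colon \mathbb{Z}_2^d \to \mathbb{Z}_2^n$ by sending the $i$-th standard basis vector $e_i$ to $s_i$. This $\varphi$ is a graph homomorphism from $Q^d$ (generators $e_1,\ldots,e_d$) to $G$: every edge $\{x, x+e_i\}$ of $Q^d$ maps to the edge $\{\varphi(x), \varphi(x)+s_i\}$ of $G$.

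First I would dispose of the case in which $S$ does not generate all of $\mathbb{Z}_2^n$. Let $n'=\dim\langle S\rangle$. Then $G$ splits into $2^{n-n'}$ isomorphic components, each a Cayley graph of $\mathbb{Z}_2^{n'}$ with generating set $S$. If $|U|>2^{n-1}$, a simple pigeonhole argument shows that at least one component $C$ must contain more than half of its vertices inside $U$, and it suffices to prove the statement for $G\cap C$ and $U\cap C$. So from now on I assume $\varphi$ is surjective, with kernel $K$ of size $2^{d-n}$.

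Next, lift $U$ to $\widetilde{U} := \varphi^{-1}(U) \subseteq \mathbb{Z}_2^d$. Since $\varphi$ is surjective with kernel of size $2^{d-n}$, we have $|\widetilde{U}| = |U|\cdot 2^{d-n} > 2^{n-1}\cdot 2^{d-n} = 2^{d-1}$. Huang's theorem applied to $Q^d$ therefore yields a vertex $\tilde v \in \widetilde{U}$ whose degree in the induced subgraph $Q^d[\widetilde{U}]$ is at least $\sqrt d$; that is, there are at least $\sqrt d$ indices $i$ for which $\tilde v + e_i \in \widetilde{U}$.

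The final step is to transport this lower bound down to $G[U]$. For each such index $i$, $\varphi(\tilde v) + s_i = \varphi(\tilde v + e_i) \in U$, so $\varphi(\tilde v)$ has at least $\sqrt d$ neighbors of the form $\varphi(\tilde v) + s_i$ lying in $U$; and these are distinct because the elements of $S$ are distinct. Hence $\Delta(H) \geq \deg_{Q^d[\widetilde{U}]}(\tilde v) \geq \sqrt d$, as required. The only real obstacle in this plan is the non-surjective case, but the pigeonhole reduction handles it cleanly; once $\varphi$ is surjective, the argument is essentially a one-line application of the covering map together with Huang's theorem.
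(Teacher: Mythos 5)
Your proof is correct and is essentially the same argument as the paper's: both lift $U$ through the linear map $e_i\mapsto s_i$ from $Q^d$ onto $G$, apply Huang's theorem to the preimage of $U$, and push the high-degree vertex back down, with the non-generating case handled first by restricting to a connected component containing more than half its vertices from $U$.
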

The proof, presented in the next section, is a simple application of
Huang's result. 

To prove his result, Huang shows that $Q^n$ admits
an {\em orthogonal signing}. This is a matrix obtained from the adjacency
matrix of $Q^n$ by replacing every nonzero entry by $1$ or $-1$ so that
the resulting matrix, call it $A$,
is symmetric and has orthogonal rows. Therefore, each
eigenvalue of this matrix is either $+\sqrt n$ or $-\sqrt n$, and as the
trace is zero, exactly half the eigenvalues are $+\sqrt n$ and half are
$-\sqrt n$. By Cauchy's Interlace Theorem (see, e.g., \cite{Hw} for 
a short proof) this implies that the largest eigenvalue of any principal
minor of size at least $2^{n-1}+1$ of $A$ is $\sqrt n$. The result
about the maximum degree of any induced subgraph of $Q^n$ on a set
$U$ of more than half the vertices follows from the simple fact that 
any eigenvalue of a matrix in which all nonzero entries have absolute
value $1$ is at most the size of the largest support of a row  of the
matrix. 

Motivated by this proof we define a {\em unitary signing}
\footnote{While similar terminology is used, orthogonal and 
unitary signings are not in general orthogonal or unitary matrices, 
which are matrices $M$ such that $MM^t$ or $MM^*$ is the identity.}
of a graph. This is a Hermitian
matrix obtained from the adjacency 
matrix of the graph by replacing each nonzero entry
by a complex number of norm $1$, 
so that the (complex) inner product of any two distinct
rows is $0$.  Huang's argument works in the complex case
as well, implying that if a $d$-regular graph admits a unitary signing,
then the maximum degree of any induced subgraph of it on a set of more
than half the vertices is at least $\sqrt d$. 
Unitary signings of graphs appear to be interesting in their own right. 

In Section \ref{sec: unitary signings} we prove the following result, providing additional 
examples of Cayley graphs of $\mathbb{Z}_{2}^{n}$ with 
orthogonal and unitary signings.

\begin{theo}
\label{t21}
Let $T = \{e_1, \ldots, e_n \}$ and let $U = \{E_2, \ldots, E_n \}$
where $e_i$ is the length $n$ boolean vector of Hamming weight $1$
with $1$ in its
$i$th coordinate and $E_i = \sum_{k=1}^{k = i} e_k$.
Let $G$ be a Cayley graph of $\ZZn$ with a generating set $S \subseteq
T \cup U$ so that $|S \cap U| \leq 1$. Then $G$ admits 
a unitary signing.
\end{theo}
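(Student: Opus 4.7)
The plan is to realize each generator $s \in S$ by a Hermitian matrix $N_s$ whose support is exactly the set of $s$-edges of $G$, whose nonzero entries have modulus $1$, and which satisfies $N_s^2 = I$ and $N_s N_{s'} = -N_{s'} N_s$ for distinct $s, s' \in S$. Once this is done, $A := \sum_{s \in S} N_s$ is Hermitian, has modulus-$1$ entries precisely on the edges of $G$, and satisfies $A^2 = |S| \cdot I$ (the cross terms cancel by anti-commutation, and each $N_s^2$ contributes $I$ to the diagonal); equivalently, the rows of $A$ are pairwise orthogonal, which is exactly the definition of a unitary signing. For the $e_i$-edges the natural choice is
\begin{equation*}
 M_i := \sigma_z^{\otimes (i-1)} \otimes \sigma_x \otimes I^{\otimes (n-i)}, \qquad i = 1, \ldots, n,
\end{equation*}
a $\pm 1$ signing of the $e_i$-edges that is essentially Huang's original signing unpacked: each $M_i$ is Hermitian, squares to $I$, and any two distinct $M_i, M_j$ anti-commute by the Pauli relations.

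If $S \cap U = \emptyset$, setting $I := \{i : e_i \in S\}$ and $A := \sum_{i \in I} M_i$ instantly gives $A^2 = |S| \cdot I$, yielding a (real, orthogonal) signing of $G$. The substantive case is $S \cap U = \{E_m\}$ for some $m \geq 2$. Here I would construct an additional operator $N_{E_m}$ that signs the $E_m$-edges and anti-commutes with each $M_i$, $i \in I$. Since $E_m$ flips exactly the first $m$ coordinates, $N_{E_m}$ must be a tensor product $P_1 \otimes \cdots \otimes P_n$ with $P_k \in \{\sigma_x, \sigma_y\}$ for $k \leq m$ and $P_k \in \{I, \sigma_z\}$ for $k > m$; any such tensor product is automatically Hermitian, has unit-modulus nonzero entries, and squares to $I$.

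The main obstacle is choosing the $P_k$ so that $N_{E_m}$ anti-commutes with every $M_i$ for $i \in I$. Since two tensor products of single-qubit Paulis anti-commute precisely when they anti-commute at an odd number of positions, the condition reduces, for each $i \in I$, to a single parity equation on $P_i$: one tabulates the contributions of positions $k < i$, $k = i$, and $k > i$ using $\{\sigma_a, \sigma_b\} = 0$ for distinct $a, b \in \{x, y, z\}$ and the fact that $I$ commutes with everything. A short case analysis then shows that taking $P_k = \sigma_y$ when $k \leq m$ is odd, $P_k = \sigma_x$ when $k \leq m$ is even, and $P_k = \sigma_z$ uniformly for all $k > m$ if $m$ is even (respectively $P_k = I$ for all $k > m$ if $m$ is odd) works simultaneously for every $i \in I$. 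With this $N_{E_m}$, the matrix $A := \sum_{i \in I} M_i + N_{E_m}$ satisfies $A^2 = |S| \cdot I$, giving the desired unitary signing of $G$.
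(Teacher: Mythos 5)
Your proof is correct and follows essentially the same approach as the paper: construct a pairwise anticommuting family of Hermitian Pauli-tensor edge signings, one per generator, and observe that their sum $A$ satisfies $A^2=|S|I$, which together with Hermiticity forces pairwise orthogonal rows. The only differences are cosmetic---your $M_i$ places the $\sigma_z$'s in positions $<i$ rather than $>i$ as in the paper's $A_{e_i}$, which in turn changes the bookkeeping for $N_{E_m}$ (the paper uniformly uses $R_0=\sigma_z$ outside the support of $E_m$, whereas your convention forces the $\sigma_z$-versus-$I$ split according to the parity of $m$).
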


Regarding the necessity of using complex numbers in unitary signings, 
we show in Section \ref{sec: un vs or} that some graphs have unitary signings
but do not have orthogonal signings. Let $Q_{+}^n$ denote the Cayley graph
of $\ZZn$ with the generating set $S=\{e_1, \ldots ,e_n,E_n\}$. 
\begin{prop}
\label{Pr: No Real Signing}
For $n \geq 2$, $Q_{+}^{n}$ has an orthogonal signing if and only 
if $n \equiv 0,3 \pmod{4}$. In particular 
for $n \equiv 1, 2 \pmod{4}$, $Q_{+}^{n}$ has a unitary 
signing but no orthogonal signing.
\end{prop}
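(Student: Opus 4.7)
The plan is to decompose any hypothetical orthogonal signing $M$ of $Q_+^n$ as $M = A + B$, where $A$ signs the $Q^n$-edges (generators $e_1, \dots, e_n$) and $B$ signs the $E_n$-matching. Expanding $M^2(v, w)$ via the common neighbors of $v$ and $w$, the condition $M^2 = (n+1) I$ is equivalent to the product of $M$'s signs around every $4$-cycle of $Q_+^n$ equaling $-1$. For $n \geq 4$, solutions of $s_1 + s_2 + s_3 + s_4 = 0$ in $S = \{e_1, \dots, e_n, E_n\}$ give exactly two types of such $4$-cycles: Type A $(v, v+e_i, v+e_i+e_j, v+e_j)$ lying inside $Q^n$, and Type B $(v, v+e_i, v+e_i+E_n, v+E_n)$ using two $E_n$-edges. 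The Type A conditions say exactly that $A$ is an orthogonal signing of $Q^n$. Writing $\alpha(v, v+e_i) = A(v, v+e_i)$ and $\beta(v) = B(v, v+E_n)$ (so $\beta(v+E_n) = \beta(v)$), the Type B conditions translate to
\[
\beta(v)\,\beta(v+e_i) = -\alpha(v, v+e_i)\,\alpha(v + E_n, v + E_n + e_i) =: \gamma_i(v)
\]
for all $v$ and $i$.

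I would then view this as a coboundary equation for $\beta \colon \ZZn / \langle E_n \rangle \to \{\pm 1\}$. The $4$-cycle obstructions for $\gamma_i$ vanish automatically, since the product $\gamma_i(v) \gamma_j(v+e_i) \gamma_i(v+e_j) \gamma_j(v)$ equals the product of the Type A $4$-cycle products at $v$ and at $v+E_n$, both of which are $-1$. Hence the only remaining obstruction comes from the closed walk $v \to v+e_1 \to \cdots \to v + E_n \equiv v$ in the quotient, yielding the single condition
\[
\Phi(v)\,\Phi(v + E_n) = (-1)^n,
\]
where $\Phi(v) = \prod_{i=1}^{n} \alpha(v + e_1 + \cdots + e_{i-1},\, v + e_1 + \cdots + e_i)$. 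This quantity is gauge-invariant: any two orthogonal signings of $Q^n$ differ by a transformation $A \mapsto DAD$ with $D$ diagonal $\pm 1$ (since the cycle space of $Q^n$ is spanned by $4$-cycles, the ratio is trivial on every cycle and hence a coboundary), and under such a gauge $\Phi(v)$ picks up a factor $\sigma(v)\sigma(v+E_n)$ which cancels in $\Phi(v)\Phi(v+E_n)$. So the obstruction depends only on $n$. For Huang's signing $\alpha(v, v+e_k) = (-1)^{v_1 + \cdots + v_{k-1}}$, a direct telescoping computation gives $\Phi(v) \Phi(v + E_n) = (-1)^{\binom{n}{2}}$, so the condition becomes $\binom{n}{2} \equiv n \pmod 2$, equivalent to $n \equiv 0, 3 \pmod 4$.

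Thus for $n \geq 4$ with $n \equiv 0, 3 \pmod 4$, solving for $\beta$ yields an explicit orthogonal signing; otherwise none exists. The cases $n = 2, 3$ are dispatched directly: $Q_+^2 = K_4$ admits no orthogonal signing (after normalizing one row to $+1$'s, the orthogonality of the remaining three rows forces incompatible constraints), matching $2 \equiv 2 \pmod 4$; and $Q_+^3 \cong K_{4,4}$ has an orthogonal signing given by a $4 \times 4$ Hadamard matrix, matching $3 \equiv 3 \pmod 4$. The ``in particular'' clause is then immediate from Theorem \ref{t21} applied to $S = T \cup \{E_n\}$: since $|S \cap U| = 1$, the graph $Q_+^n$ always admits a unitary signing, so for $n \equiv 1, 2 \pmod 4$ it has a unitary signing but no orthogonal one. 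The main technical obstacle will be the gauge-invariance verification and the sign bookkeeping in the explicit computation of $\Phi(v)\Phi(v+E_n)$ for Huang's signing.
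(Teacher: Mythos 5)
The proposal is correct and takes a genuinely different route from the paper. Both proofs reduce to the same $4$-cycle parity condition (for a signing $M$ to satisfy $M^2 = (n+1)I$, the product of the signs around every $4$-cycle must be $-1$, which works because $S$ is a Sidon set for $n \ge 4$), but they handle the obstruction in opposite ways. The paper exhibits an explicit odd collection of $4$-cycles --- the staircase $f(L_n)$ --- in which every edge is used an even number of times; summing the $4$-cycle equations then gives $0 = 1$, a direct inconsistency of the linear system over $\mathbb{Z}_2$. You instead split $M = A + B$ into the $Q^n$-part and the $E_n$-matching, observe that the Type A cycle conditions force $A$ to be an orthogonal signing of $Q^n$, and rewrite the Type B conditions as a $\mathbb{Z}_2$-coboundary equation for $\beta$ on the folded cube $\mathbb{Z}_2^n / \langle E_n \rangle$. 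After showing the $4$-cycle obstructions for $\gamma_i$ vanish automatically, you locate the single remaining obstruction on the ``wrapping'' cycle of the quotient and evaluate it as $\Phi(v)\Phi(v+E_n) = (-1)^{\binom{n}{2}}$, which equals the required $(-1)^n$ iff $n \equiv 0,3 \pmod 4$. The gauge-invariance step (using that the cycle space of $Q^n$ is spanned by $4$-cycles, so any two orthogonal signings differ by $A \mapsto DAD$) is the key device that lets you reduce to Huang's explicit signing. Your argument buys a cleaner ``iff'' in one stroke, since the vanishing of the obstruction both proves non-existence for $n \equiv 1,2$ and produces $\beta$ (hence an orthogonal signing) for $n \equiv 0,3$; the paper instead checks separately that the Theorem~\ref{t21} construction happens to be real when $n \equiv 0,3$. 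The paper's staircase is more elementary and requires no cohomological language or cycle-space facts, whereas yours is more conceptual but relies on the (standard, but unproven in the proposal) facts that $Z_1(Q^n;\mathbb{Z}_2)$ is generated by $4$-cycles and that the cycle space of the folded cube equals the pushforward of $Z_1(Q^n)$ plus one wrapping class. Your treatment of the small cases $n=2$ ($K_4$ has no orthogonal signing) and $n=3$ ($Q_+^3 \cong K_{4,4}$, signed by a Hadamard matrix) and the deduction of the ``in particular'' clause from Theorem~\ref{t21} are all correct.
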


Finally, as a byproduct we answer a question of 
Belardo, Cioab\u{a}, Koolen, and Wang \cite{BCKW} 
about the spectrum of signed $5$-regular
graphs.

\section{The proof of Theorem \ref{t11}}
\vspace{0.2cm}

\noindent
{\bf Proof of Theorem \ref{t11}:}\,
Let $G$ be a Cayley graph of $\ZZn$ with generating set $S=\{s_1,s_2,
\ldots ,s_d\}$. We may and will 
assume without loss of generality that $G$ is connected, that is,
the vectors of
$S$ span the vector space $\ZZn$. 
Indeed, otherwise the graph $G$ consists of
isomorphic connected components on the cosets of $Span(S)$. Any set of
more than half the vertices of $G$ contains more than half the vertices
of at least one of these components, and we can thus deduce the result
from the one for the connected case.

Let $Q^d$ be the $d$-dimensional
hypercube. This is the Cayley graph of $\ZZd$ with respect to the
set of generators $\{e_1,e_2, \ldots ,e_d\}$ consisting of all the
vectors of Hamming weight $1$ in $\ZZd$. Let $T$ be the linear
transformation from $\ZZd$ to $\ZZn$ defined by $T(e_i)=s_i$
for all $1 \leq i \leq d$. Since $S$ spans $\ZZn$, $T$ is onto, and the
inverse image of any element of $\ZZn$ contains exactly
$2^{d-n}$ elements. Let $U \subseteq \ZZn$ be a set of more than 
half the vertices of $G$. Then the inverse image $W=T^{-1}(U)$
of $U$ contains more than half the elements of $\ZZd$. 
By Huang's result there are distinct $w, w_1,w_2, \ldots ,w_t
\in W$, with $t \geq \sqrt d$, such that $w$ is adjacent 
in $Q^d$ to each $w_i$, $1 \leq i \leq t$. Therefore, the vertex
$T(w) \in U$ is adjacent in $G$ to each of the $t \geq \sqrt d$ 
distinct vertices $T(w_1), T(w_2), \ldots ,T(w_t)$, which all lie in
$U$. This completes the proof. \hfill $\Box$

\section{Unitary signings of graphs} \label{sec: unitary signings}

As mentioned in the introduction, we consider here
unitary and orthogonal signings of graphs, defined as follows.
\begin{definition}
A {\em unitary signing}
of a graph is a Hermitian matrix obtained from the 
adjacency matrix of the graph by replacing each nonzero entry by
a complex number of norm $1$, so that the (complex) inner product of
any two distinct rows is $0$. If all nonzero entries used are real,
that is, all are $+1$ or $-1$, this is an {\em orthogonal signing}.
\end{definition}
It is shown in \cite{Hu} that the hypercube $Q^n$ admits an orthogonal
signing. We extend this result in Theorem \ref{t21} which is proved
in this section.
Note that any Cayley graph of $\ZZn$ with a generating set consisting
of $n+1$ elements which span $\ZZn$ is isomorphic to one of the
graphs described in this theorem.

A useful fact about unitary signings of Cayley graphs
of $\ZZn$ is
that we can view them as sums of what may be called 
{\em edge signings}, as stated in the next
simple lemma.
\begin{lemma}
\label{lm: add signings}
Let $G = \Gamma(\mathbb{Z}_{2}^{n}, S)$,
and suppose $S = \{a_1, \ldots, a_m \}$ where each $a_j$
is distinct. If $A_j$ is a unitary signing of
$\Gamma(\mathbb{Z}_{2}^{n}, \{a_j\})$ and the matrices
$A_j$ anticommute, then
$A = \sum_{j=1}^{j=m} A_j$ is a unitary signing of $G$.
\end{lemma}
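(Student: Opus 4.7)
The plan is to verify directly the three defining properties of a unitary signing for $A$: that it is Hermitian, that every nonzero entry has modulus $1$, and that any two distinct rows are orthogonal. Hermiticity is immediate since $A$ is a sum of Hermitian matrices. For the entrywise condition, I would observe that the edge sets of the Cayley graphs $\Gamma(\ZZn, \{a_j\})$ are pairwise disjoint: the edge $\{g, g+a_j\}$ determines $a_j$ once its endpoints are given, so distinct generators $a_j$ yield disjoint matchings. Consequently, the supports of the $A_j$ (off the diagonal) are pairwise disjoint, and each nonzero entry of $A$ equals the unique contribution from a single $A_j$, hence has modulus $1$.

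The substantive step is row orthogonality, which, since $A^\ast = A$, is equivalent to showing that the off-diagonal entries of $A^2$ vanish. I would expand
\[
A^2 \;=\; \sum_{j=1}^{m} A_j^2 \;+\; \sum_{1 \le j < k \le m} \bigl( A_j A_k + A_k A_j \bigr),
\]
and treat the two sums separately. For the diagonal sum, each graph $\Gamma(\ZZn, \{a_j\})$ is a perfect matching, so $A_j$ has exactly one nonzero entry per row; a short bookkeeping using Hermiticity of $A_j$ shows that $(A_j^2)_{gg}$ equals $(A_j)_{g,g+a_j}\overline{(A_j)_{g,g+a_j}} = 1$, while the off-diagonal entries of $A_j^2$ vanish since chaining two edges of the matching must return from $g$ to $g$. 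Hence $A_j^2 = I$ for each $j$. For the cross terms, the anticommutativity hypothesis $A_j A_k = -A_k A_j$ forces each symmetric pair to cancel.

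Combining these observations yields $A^2 = mI$, so $AA^\ast = A^2 = mI$, which exactly says that distinct rows of $A$ are orthogonal (and, as a bonus, that all eigenvalues of $A$ are $\pm\sqrt{m}$). Thus $A$ satisfies all three conditions and is a unitary signing of $G$.

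I do not expect a serious obstacle here: the only mild subtlety is checking $A_j^2 = I$ carefully from Hermiticity, after which anticommutativity does all the work on the cross terms. The lemma is essentially a structural restatement that compatible single-generator signings combine additively, a fact tailor-made for the constructive inductive arguments used later in the proof of Theorem \ref{t21}.
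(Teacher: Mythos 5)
Your proof is correct, and it fills in the details that the paper dismisses with the single line ``Obvious from the definition.'' Your decomposition $A^2 = \sum_j A_j^2 + \sum_{j<k}(A_jA_k + A_kA_j)$, using that each $A_j$ squares to the identity (it is a Hermitian signing of a perfect matching) and that the cross terms cancel by anticommutativity, is exactly the computation the authors are implicitly invoking, and the disjoint-support observation correctly handles the entrywise condition.
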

\begin{proof}
Obvious from the definition.
\end{proof}

We refer to a signing of a single element Cayley graph, 
$\Gamma(\mathbb{Z}_{2}^{n}, \{e\})$, as simply a signing of 
the edge $e$ or an edge signing of $e$. If the signing is also a unitary signing, we may call it a unitary signing of $e$ or a unitary edge signing. Such a Cayley graph is a perfect matching on $\ZZn$.

A convenient way of constructing edge signings when $n \geq 2$ is to 
use Kronecker products of the following $2 \times 2$ Hermitian matrices.
The first one is the identity, and the other three are 
known as Pauli matrices, and generate the Clifford Algebra of $\mathbb{R}^3$. 
\begin{align*}
I_2 &=
\left[
\begin{array}{cc}
1 & 0 \\
0 & 1 
\end{array}
\right], \\
R_0 &=
\left[
\begin{array}{cc}
1 & 0 \\
0 & -1 
\end{array}
\right], \\
R_1 &=
\left[
\begin{array}{cc}
0 & 1 \\
1 & 0 
\end{array}
\right], \\
R_2 &=
\left[
\begin{array}{cc}
0 & i \\
-i & 0 
\end{array}
\right]. \\
\end{align*}

With this notation, it is clear that 
$A_n \otimes \cdots \otimes A_1$ is a unitary signing 
of $e = (a_n, \ldots, a_1)$ if $A_j \in \{I_2, R_0 \}$ whenever $a_j = 0$, 
and $A_j \in \{R_1, R_2 \}$ whenever $a_j = 1$. 
We will sometimes refer to $A_i$ as the matrix in the $i$th position 
of the Kronecker product. Moreover, we will also make liberal use of 
the fact that $\{ R_0, R_1, R_2 \}$ is an anticommuting family while 
all other pairs of the above four matrices commute. 
When combined with the mixed product property, this fact provides 
an easy way to check if two signings of the above form anticommute. 
Indeed, if $A = A_n \otimes \cdots \otimes A_1$ and 
$B = B_n \otimes \cdots \otimes B_1$, with all 
$A_i, B_i \in \{I_2, R_0, R_1, R_2 \}$, then $A$ and $B$ 
either anticommute or commute. They anticommute if and only if 
an odd number of the pairs $A_i, B_i$  for $1 \leq i \leq n$
anticommute. 
In order to prove Theorem \ref{t21} we
construct anticommuting families of unitary edge signings using this method.
\begin{lemma}
\label{lm: AC}
For any generating set $S \subseteq \mathbb{Z}_{2}^{n}$ 
as described in Theorem \ref{t21}, there exists an anticommuting 
family of unitary edge signings $\{A_e\}_{e \in S}$.
\end{lemma}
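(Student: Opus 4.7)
The plan is to construct every $A_e$ as a Kronecker product of matrices from $\{I_2, R_0, R_1, R_2\}$ of the form prescribed in the paragraph preceding the lemma, and to exploit the characterization stated there: two such products anticommute if and only if they disagree, via an anticommuting pair in $\{R_0,R_1,R_2\}$, at an odd number of positions.

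For the generators in $S\cap T$ I would use the standard Jordan--Wigner-type signing. For each $e_i\in S\cap T$, let $A_{e_i}$ be the Kronecker product with $R_1$ in the $i$-th position, $R_0$ in every position $j<i$, and $I_2$ in every position $j>i$. This is a unitary edge signing of $e_i$, and for $i<j$ the matrices $A_{e_i}$ and $A_{e_j}$ differ by an anticommuting pair only at position $i$ (where $R_1$ meets $R_0$), so they anticommute.

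If $S\cap U=\emptyset$ the proof is finished. Otherwise $S\cap U=\{E_k\}$ for a single $k\in\{2,\dots,n\}$, and the delicate step is to define $A_{E_k}$ so that it anticommutes with every $A_{e_i}$ for $e_i\in S\cap T$. My proposal is to put $R_2$ at each odd position $j\le k$, $R_1$ at each even position $j\le k$, and in every position $j>k$ either $R_0$ (if $k$ is even) or $I_2$ (if $k$ is odd); this is a unitary edge signing of $E_k$ by construction. A direct case analysis on whether $i\le k$ or $i>k$ then shows that the number of anticommuting factor-pairs between $A_{e_i}$ and $A_{E_k}$ equals $(i-1)+[i\text{ odd}]$ in the first case and $k+[k\text{ even}]$ in the second, and both quantities are odd. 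Thus $A_{E_k}$ anticommutes with every $A_{e_i}$, and combined with the previous step this produces the required anticommuting family.

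The heart of the argument, and the main obstacle, is the definition of $A_{E_k}$: a uniform choice of $R_1$ or of $R_2$ throughout positions $1,\dots,k$ produces an anticommuting count that depends on the parity of $i$, so it anticommutes with only half of the $A_{e_i}$'s. The alternating $R_1/R_2$ pattern is what neutralizes this parity dependence, after which a uniform choice of $I_2$ or $R_0$ in the positions above $k$, determined by the parity of $k$, handles the generators $e_i$ with $i>k$.
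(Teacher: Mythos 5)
Your proposal is correct and takes essentially the same approach as the paper: both build the edge signings as Kronecker products of $I_2,R_0,R_1,R_2$, use a Jordan--Wigner pattern for the $e_i$'s, and use an alternating $R_1/R_2$ pattern on positions $1,\dots,k$ to make $A_{E_k}$ anticommute with every $A_{e_i}$ regardless of the parity of $i$. The only differences are cosmetic (your $R_0$-string for $A_{e_i}$ runs below position $i$ rather than above it, and your padding above position $k$ in $A_{E_k}$ is $I_2$ or $R_0$ according to the parity of $k$ rather than always $R_0$); your parity counts $(i-1)+[i\text{ odd}]$ and $k+[k\text{ even}]$ are verified to be odd, so the construction works.
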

\begin{proof}
    Let $A_{e_i} = M_n \otimes \cdots \otimes M_1$ where 
\[
\begin{cases}
    M_j &= R_0,  \quad \; \; \; \text{if} \quad j > i\\
    M_j &= R_1,  \quad \; \; \; \text{if} \quad j = i\\
    M_j &= I_2,  \quad \; \; \; \; \text{if} \quad j < i.\\
\end{cases}
\]
It is not difficult to check 
that $\{A_{e_i}\}_{i \in [n]}$ is an anticommuting 
family of unitary edge signings where $[n]$ denotes the integers from $1$ to $n$. 
In fact, they are also orthogonal signings and $\sum_{i=1}^{i=n} A_{e_i}$ is the matrix that Huang 
constructs in Lemma 2.2 of \cite{Hu}. To finish the proof, 
it remains to show that for any $E_i \in U$, there is a unitary signing 
for the edge $E_i$ that anticommutes with each matrix 
in $\{A_{e_i}\}_{i \in [n]}$. 
First define $B_k = M_k \otimes \cdots \otimes M_1$ where  
\[
\begin{cases}
    M_j &= R_1,  \quad \; \; \; \text{if $k-j$ is odd} \\
    M_j &= R_2,  \quad \; \; \; \text{if $k-j$ is even.} \\
\end{cases}
\]
The key property of the expansion of $B_k$ is that each $R_1$ 
has an odd number of matrices to the left of it, while each 
$R_2$ has an even number of matrices to the left of it. 
$E_i$'s edge signing is as follows, 
\[
A_{E_i} = 
\underbrace{R_0 \otimes \cdots 
\otimes R_0}_{n - i \text{ times}} \otimes B_i.
\] 
To check that any $\{A_{E_i}\} \cup \{A_{e_i}\}_{i \in [n]}$ 
is an anticommuting family of size $n+1$ we show that $A_{E_i}$ 
and $A_{e_j}$ anticommute for any $i, j$. We count the 
number of anticommuting pairs of matrices in their Kronecker 
product expansions. If $i \leq j$ there is one 
anticommuting pair: the $R_1$ 
from $A_{e_j}$ with either an $R_0$ if $i<j$ or an $R_2$ if $i = j$. 

If $i > j$ then $A_{e_j}$ has $R_1$ in the $j$th position, 
while $A_{E_j}$ can have either $R_1$ or $R_2$. If it 
has $R_1$ as well, then the $j$th position is not an 
anticommuting pair, but we have an odd number of $R_2$ and $R_1$'s 
to the left of the $j$th position as mentioned above. 
These $R_2$ and $R_1$ matrices all anticommute with the 
corresponding $R_0$'s in $A_{e_i}$'s expansion. 
Likewise, if we have $R_2$ in the $j$th position, then all the 
$R_1$'s and $R_2$'s in $B_i$ to the left of and including the 
$j$th position form anticommuting pairs and again this is an odd 
number of pairs. In each case every other pair commutes as 
it is either $R_0R_0$ or a pair with one of the matrices being $I_2$.
\end{proof}

We can now prove Theorem \ref{t21}. Let $G$ be a 
Cayley graph of $\ZZn$ with generating 
set $S$ as described in the theorem.
By Lemma \ref{lm: AC}, there is a family of anticommuting matrices 
consisting of unitary signings for the edges in $S$. The sum of 
these matrices is a unitary signing of $G$ by Lemma \ref{lm: add signings}.
\hfill  $\Box$

It is worth noting that there are additional pairs of anticommuting 
matrices outside of the family described in Lemma \ref{lm: AC}. 
This leads to unitary signings of certain Cayley graphs 
of $\ZZn$ with generating sets consisting of vectors 
from $T \cup U$ and up to two vectors from $U$. 

\begin{coro}
Any Cayley graph of $\ZZn$ with generating set consisting of a 
subset of $T = \{e_1, \ldots, e_n\}$ and exactly two 
vectors $E_i, E_j \in U = \{E_2, \ldots, E_n \}$ 
such that $i > j$, $i$ is odd and $j$ is even, admits a unitary signing.
\end{coro}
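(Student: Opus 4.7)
The plan is to reuse the unitary edge signings $A_{e_k}$, $A_{E_i}$, $A_{E_j}$ constructed in the proof of Lemma~\ref{lm: AC}, and then apply Lemma~\ref{lm: add signings}. By the calculations inside Lemma~\ref{lm: AC}, the family $\{A_{e_k}\}_k \cup \{A_{E_i}\}$ is anticommuting, and so is $\{A_{e_k}\}_k \cup \{A_{E_j}\}$; hence it only remains to verify one new pair, namely that $A_{E_i}$ and $A_{E_j}$ themselves anticommute under the hypothesis $i > j$, $i$ odd, $j$ even. Once this is established, the sum of all the edge signings is a unitary signing of the Cayley graph by Lemma~\ref{lm: add signings}.

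The verification is a position-by-position count of anticommuting tensor factors in the Kronecker product expansions
\[
A_{E_i} = R_0^{\otimes (n-i)} \otimes B_i, \qquad A_{E_j} = R_0^{\otimes (n-j)} \otimes B_j,
\]
where $B_k$ is the alternating tensor of $R_1$ and $R_2$ on positions $1$ through $k$ described in Lemma~\ref{lm: AC}. I would split the $n$ positions into three ranges. Positions $i+1$ through $n$ have $R_0$ in both signings and therefore commute. Positions $j+1$ through $i$ have $R_0$ in $A_{E_j}$ and an element of $\{R_1,R_2\}$ in $A_{E_i}$, contributing $i-j$ anticommuting pairs. Finally, at each position $k$ in the range $1$ through $j$ both signings place an element of $\{R_1,R_2\}$: the factor in $B_i$ is $R_1$ or $R_2$ according to the parity of $i-k$, and the factor in $B_j$ according to the parity of $j-k$, so the two agree if and only if $i \equiv j \pmod 2$.

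The parity hypothesis on $i$ and $j$ enters precisely in this last range. Since $i$ is odd and $j$ is even, $i-j$ is odd, and so $i-k$ and $j-k$ have opposite parity for every $k \in \{1,\ldots,j\}$; the two tensor factors therefore differ (one is $R_1$, the other $R_2$) at each such position, contributing a further $j$ anticommuting pairs. The total count is $(i-j)+j = i$, which is odd, and hence $A_{E_i}$ and $A_{E_j}$ anticommute. I do not expect any real obstacle beyond this bookkeeping: the construction of Lemma~\ref{lm: AC} was set up exactly so that such parity arguments work, and the assumption that $i$ is odd while $j$ is even is precisely the condition that makes the total number of anticommuting positions odd.
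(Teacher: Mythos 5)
Your proof is correct and takes essentially the same approach as the paper's, which simply asserts that ``it is easy to check'' that $A_{E_i}$ and $A_{E_j}$ anticommute under the stated parity conditions and then invokes Lemmas~\ref{lm: AC} and~\ref{lm: add signings}; you have supplied exactly that verification. Your position-by-position count (contributing $i-j$ anticommuting pairs from the range where only $A_{E_i}$ has a Pauli factor, and $j$ more from the common range where the opposite parities of $i-k$ and $j-k$ force one factor to be $R_1$ and the other $R_2$, for a total of $i$ which is odd) is the intended bookkeeping.
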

\begin{proof}
Using the construction from 
Lemma \ref{lm: AC}, it is easy to check that if $i > j$, $i$ is odd 
and $j$ is even then $A_{E_i}$ and $A_{E_j}$ anticommute. 
Thus, the edge signings constructed in Lemma \ref{lm: AC} 
provide the desired unitary signing. 
\end{proof}

\section{Unitary versus orthogonal signings} \label{sec: un vs or}

One may wonder if the use of complex numbers is needed in the
proof of Theorem \ref{t21}.
We show that complex signings are indeed necessary in some 
cases.  Recall that 
$Q_{+}^{n} = \Gamma(\mathbb{Z}_{2}^{n}, S)$ is the Cayley graph
of $\mathbb{Z}_{2}^{n}$
with $S = \{e_1, \ldots, e_n, E_n \}$. This graph is simply 
the $n$-dimensional hypercube with one additional element being the
all $1$ vector added 
to its generating set. 
In this section we start with the proof of Proposition 
\ref{Pr: No Real Signing} stated in Section \ref{sec: intro}.

The reverse direction of this Proposition 
is immediate as for $n \equiv 0,3 \pmod{4}$ and $n \geq 2$, 
the unitary signing of $Q_+^n$ constructed in Theorem \ref{t21} 
only uses real numbers and is thus an orthogonal signing as well.

To complete the proof of Proposition \ref{Pr: No Real Signing} we first discuss a simple way to determine whether certain graphs have an orthogonal signing. The method works whenever the graph in question has the property that any two vertices with a common neighbor have exactly two common neighbors. In Cayley graphs of $\ZZn$ this holds if and only if the generating set of the graph is a Sidon set.
A Sidon set is a set with the property that any distinct pair of 
elements has a unique sum, that is $s_1+s_2 = s_3 + s_4$ if and only if $\{s_1, s_2\} 
= \{s_3, s_4\}$ for all $s_1, s_2, s_3, s_4 \in S$ satisfying $s_1 \neq s_2$ and $s_3 \neq s_4$. 
Note that the generating set of $Q^n$ is a Sidon set and the generating 
set of $Q_{+}^{n}$ is a Sidon set when $n \neq 3$. Thus, the 
graph $Q_+^n$ with $n \geq 2$ and $n \equiv 1, 2 \pmod{4}$ has the 
property that vertices with a common neighbor have exactly two common 
neighbors. It is worth noting that there are other 
well known strongly-regular or distance-regular graphs 
with this property, including the 
Shrikhande graph, the $2$-dimensional Hamming graph 
over $\mathbb{Z}_4$, the Gewirtz graph, and the Klein graph,
see \cite{BCN}. 

Given an orthogonal signing of the adjacency matrix $M$ of
a graph, one can define a labelling, $f$, of the edges of the graph 
using $1$'s and $0$'s as follows. Let $e = ij$ be an edge, then put

\[
    f(e)= 
\begin{cases}
    1,& \text{if } M_{ij} = -1\\
    0,              & \text{otherwise.}
\end{cases}
\]

For Cayley graphs $\Gamma(\ZZn, S)$ where $S$ is a Sidon set, 
we can then reformulate the problem of finding orthogonal signings as follows.

\begin{lemma}
\label{Lm: cycles}
Let $G = \Gamma(B, S)$ be a Cayley graph of $B=\mathbb{Z}_{2}^{n}$ 
where $S$ is a Sidon set. Let $C$ be the set  of all $4$-cycles 
in $G$ where the elements of $C$ are sets of $4$ edges that form a $4$ cycle. 
An orthogonal signing of $G$ exists if and only if 
there exists a labelling of the edges of the graph 
$f: E(G) \xrightarrow{} \{0, 1\}$, 
such that $\sum_{e \in c} f(e) = 1$ for all cycles $c \in C$, where
each sum here is computed in the group $\mathbb{Z}_2$.
\end{lemma}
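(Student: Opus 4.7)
The plan is to translate the row-orthogonality condition on the signed adjacency matrix $M$ directly into a condition on $4$-cycles, using the Sidon hypothesis to pin down the structure of common neighbors. Specifying an orthogonal signing of $G$ is equivalent to specifying a function $g : E(G) \to \{\pm 1\}$ on the edges (symmetry of $M$ is automatic since $M$ is real and $\{-1,0,1\}$-valued), and via $g(e) = (-1)^{f(e)}$ this is in turn equivalent to specifying a $\{0,1\}$-labelling $f$. So both directions of the lemma reduce to showing that orthogonality of all pairs of distinct rows of $M$ is equivalent to requiring that, for every $4$-cycle $c$, the product of the four signs $g(e)$ for $e \in c$ equals $-1$.

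First I would invoke the Sidon property to prove that any two distinct vertices $u, v \in \ZZn$ have either $0$ or exactly $2$ common neighbors in $G$. Since $w$ is a common neighbor of $u$ and $v$ exactly when $u+w$ and $v+w$ both lie in $S$, such $w$ correspond to unordered representations $u + v = s + s'$ with $s \neq s'$ in $S$. The Sidon condition makes such a representation unique if it exists, so common neighbors come in pairs $w_1, w_2$ (obtained by swapping which of $s, s'$ is paired with $u$), and then $u, w_1, v, w_2$ forms a $4$-cycle of $G$.

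Next I would compute the inner product of the $u$th and $v$th rows of $M$ for distinct $u \neq v$. Only common neighbors contribute, so it is either an empty sum (automatically zero) or equals $M_{uw_1}M_{vw_1} + M_{uw_2}M_{vw_2}$. Since each $M_{ij} \in \{\pm 1\}$ on edges, the two summands are $\pm 1$, and their sum vanishes precisely when the product $M_{uw_1} M_{w_1 v} M_{vw_2} M_{w_2 u}$ around the associated $4$-cycle equals $-1$. Conversely, every $4$-cycle $u\,w_1\,v\,w_2$ of $G$ arises from a pair of opposite vertices $u, v$ (indeed from both opposite pairs, which yield the same condition), so ranging over pairs of distinct row indices recovers exactly the set $C$. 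Hence orthogonality of all row pairs is equivalent to the sign product around every $c \in C$ being $-1$.

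Finally, passing through $g = (-1)^f$ gives $\prod_{e \in c}(-1)^{f(e)} = (-1)^{\sum_{e \in c} f(e)}$, so the sign product equals $-1$ iff $\sum_{e \in c} f(e) \equiv 1 \pmod 2$. This yields both directions of the lemma: a valid $f$ gives an orthogonal signing by $M_{ij} = (-1)^{f(ij)}$ on edges, and an orthogonal signing gives a valid $f$ by reading off where the entry is $-1$. The only real content is the Sidon-based reduction of common neighbors to pairs; everything afterwards is a direct sign calculation.
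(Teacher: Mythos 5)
Your proof is correct and follows essentially the same route as the paper: use the Sidon property to show that any two distinct vertices have $0$ or exactly $2$ common neighbors, then observe that orthogonality of the corresponding rows is equivalent to the product of the four signs around the resulting $4$-cycle being $-1$, i.e.\ an odd number of edges labeled $1$. You merely spell out the Sidon-to-common-neighbors reduction and the sign computation in a bit more detail than the paper does.
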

\begin{proof}
Since $S$ is a Sidon set, $G$ has the property that any two vertices 
with a common neighbor share exactly two common neighbors. 
Let $M_{ij}$ be the entry of the edge $e = ij$. 
If an orthogonal signing exists, then for any $4$-cycle, in order for 
the rows indexed by opposite vertices of the $4$-cycle to be orthogonal, 
the $4$-cycle must have exactly $3$ edges whose entry is $1$ or exactly 
$3$ edges whose entry is $-1$. The desired labelling of $f$ is thus the
one described above.

On the other hand, if such a labelling $f$ exists, one can define the signed 
adjacency matrix $M$ by $M_{ij} = -1$ if the edge $ij$ is labelled $1$ 
and $M_{ij} = 1$ if the edge $ij$ is labelled $0$. 
Then the inner products of rows $i$ and $j$ is the sum, over all 
length $2$ walks between $i$ and $j$, of the product of the edge 
entries of each walk. There are either $2$ or $0$ length 
$2$ walks between any two vertices. The result 
then follows from the assumption that in each $4$-cycle there is
an odd number of edges labelled $1$.
\end{proof}

We can now prove the remaining direction in Proposition \ref{Pr: No Real Signing}.

\begin{proof}
To show that $Q_+^n$ has no orthogonal signing when $n \geq 2$ and $n \equiv 1, 2 \pmod{4}$, it suffices, by Lemma \ref{Lm: cycles}, to find an odd number of cycles such that each edge used in a cycle is used in an even number of cycles.
Indeed, let $C_1, \ldots, C_{2k+1}$ be such a family
cycles. If an orthogonal signing exists, we 
have $\sum_{e \in C_{i}} f(e) = 1$, for each $C_i$. Adding 
the $2k+1$ equations together yields 
$\sum_{e \in \bigcup_{i=1}^{i=2k+1}C_i} f(e) = 1$. 
However, since each edge is used an even number of times, the left 
hand side is $0$, yielding a contradiction. 

Thus, it remains to show that an odd collection of cycles as above 
exists. We describe such a collection arranged in a 
staircase shape. This collection in $Q_+^5$ is depicted in Figure 1 
and the general construction, described in what follows, is similar. 

Consider the integer lattice with all coordinates $(x, y)$ such 
that $x+y \leq n+1$ and $n\geq x, y \geq 0$. Notice that 
connecting adjacent lattice points gives a staircase grid 
composed of $\frac{n(n+1)}{2}$ unit squares. We can view this 
structure as a graph $L_n$ whose set of vertices is
$V(L_n)= \{(x,y): x+y \leq n+1, n\geq x,y\geq0 \}$ where 
$(x_1, y_1)$ is adjacent to $(x_2, y_2)$ if and only if the two lattice points are exactly one unit apart. It is easy to see that the $4$-cycles of $L_n$ 
correspond exactly to the unit squares of the staircase shape 
and that as a result $L_n$ has $\frac{n(n+1)}{2}$ $4$-cycles.
We identify each vertex $(x,y) \in V(L_n)$ with a vertex 
in $V(Q_+^n)$ such that the edges  of $L_n$ are preserved. 
Define $f: V(L_n) \xrightarrow{} \{0, 1\}^n$ to be this mapping. 
Let $S = \{e_1, \ldots, e_n, E_n \}$ be the generating vectors 
of $Q_+^n$ as defined previously. 
Write $f(x,y)$ instead of $f((x,y))$, for simplicity and use $(a,b)$ to denote the edge between $a$ and $b$ 
when $a$ and $b$ are vertices (in $L_n$ or in $f(L_n)$).

Define 
\begin{align*}
    f(0,0) &= (0, \ldots, 0), \\
    f(x, 0) &= e_1 + \ldots + e_x, \quad \quad \quad \text{\quad for $x \geq 1$} \\ 
    f(0, y) &= e_1 + \ldots + e_{n+1-y}, \quad \quad \text{for $y \geq 1$} \\
    f(x, y) &= f(x, 0) + f(0, y).
\end{align*}

It is clear that $f$ preserves the edges of $L_n$, as $f(x+1, y) - f(x, y),  f(x, y+1) - f(x, y)\in S$. Moreover if the vertices $\{(x, y), (x+1, y), (x, y+1), (x+1, y+1) \}$ form a $4$-cycle in $L_n$, then $x+1+y+1 \leq n+1$ and 
\begin{align*}
    f(x+1, y) - f(x, y) = e_{x+1} \neq e_{n+1-y} = f(x, y+1) - f(x, y).
\end{align*}

Thus, no $4$-cycle of $L_n$ is mapped to a degenerate $4$-cycle, so $f(L_n)$ yields a collection of $\frac{n(n+1)}{2}$ $4$-cycles in the graph $Q_+^n$.
For $n \equiv 1, 2 \pmod{4}$ this is an odd number, 
so it remains to show that this collection of cycles uses each 
edge an even number of times. We can pair the edges 
of the $4$-cycles of $L_n$ so that in each pair, the two edges 
have the same image under $f$. This implies that the images of 
the $\frac{n(n+1)}{2}$ $4$-cycles of $L_n$ under $f$ form an odd 
collection of $4$-cycles with each edge used an even number of times
in $Q_+^n$. Note, first, that any edge in the interior can be paired 
with itself since it is used in two cycles. Next, for the boundary 
edges note that $((0,0), (1,0))$ can be paired 
with $((0, n), (1, n))$ as 
\begin{align*}
    f(1, n) &= e_1 + e_1 = 0 = f(0,0), \\
    f(1,0) &= e_1 = f(0,n).
\end{align*}
Similarly, $((0,0), (0, 1))$ can be paired with $((n, 0), (n, 1))$ as 
\begin{align*}
    f(n,1) &= E_n + E_n = 0 = f(0, 0), \\
    f(n, 0) &=  E_n  = f(0, 1).
\end{align*}
We can also pair the edge $((x, 0), (x+1, 0))$ 
with $((0, n+1-x), (0, n-x))$ for $1 \leq x \leq n-1$ as 
\begin{align*}
    f(x,0) &= e_1 + \cdots + e_x = f(0, n+1-x), \\
    f(x+1, 0) &= e_1 + \cdots + e_{x+1} = f(0, n-x). \\
\end{align*}
Lastly it remains to pair the edges of the form $((x, y), (x, y+1))$ 
and $((x, y), (x+1, y))$ for $1 \leq x,y \leq n-1$ and 
$x+y+1 = n+1$. These are the edges on the boundary, but not on
one of the axes. Notice that $((x, y), (x+1, y))$ and 
$((x, y), (x, y+1))$ are mapped to the same edge if $x+y+1 = n+1$. 
Indeed both are mapped to the edge $(e_{x+1}, 0)$. 
This exhausts all edges on the boundary implying that under 
$f(L_n)$, each edge appears in an even number of cycles. 
Thus, for $n \equiv 1, 2 \pmod{4}$, we get a collection 
of $\frac{n(n+1)}{2} \equiv 1 \pmod{2}$ cycles where each edge 
is used an even number of times. 
By Lemma \ref{Lm: cycles}, this implies that there 
is no real orthogonal signing of $Q_+^n$ 
when $n \equiv 1,2 \pmod{4}$. However, by  
Theorem \ref{t21}, a complex unitary signing does exist. 
\end{proof}
\begin{figure}[!h]
\centering
\begin{tikzpicture}[darkstyle/.style={circle,draw,fill=gray!40,minimum size=20}]
  \foreach \y in {0}
    \foreach \x in {0,...,5} 
       {\pgfmathtruncatemacro{\label}{2^\x-1}
       \node [darkstyle]  (\x\y) at (1.5*\x,1.5*\y) {\label};} 
  \foreach \x in {0}
    \foreach \y in {1,...,5} 
       {\pgfmathtruncatemacro{\label}{2^\y-1}
       \node [darkstyle]  (\x\y) at (1.5*\x,9 - 1.5*\y) {\label};} 
  \foreach \x in {1, ..., 5}
    \foreach \y in {\x, ..., 5}
        {\pgfmathtruncatemacro{\label}{2^\y-2^\x}
           \node [darkstyle]  (\x\y) at (1.5*\x,9 - 1.5*\y) {\label};}

  \foreach \x [remember=\x as \lastx (initially 0)]in {1,...,5} 
    \draw (\lastx0) -- (\x0) (\x0) -- (\x5) (\lastx0) -- (\lastx5);
    
  \foreach \y [remember=\y as \lasty (initially 1)]in {2,...,5} 
    \draw (0\lasty) -- (0\y) (0\y) -- (1\y) (0\lasty) -- (1\lasty);
  
  \foreach \x [remember=\x as \lastx (initially 1)]in {2,...,5} 
    \foreach \y [remember=\y as \lasty (initially \lastx)] in {\x, ..., 5}
      \draw (\lastx\y) -- (\lastx\lasty);
  \foreach \y [remember=\y as \lasty (initially 1)]in {2,...,5} 
    \foreach \x [remember=\x as \lastx (initially 1)] in {1, ..., \y}
      \draw (\lastx\y) -- (\x\y);
\end{tikzpicture}
\caption{A depiction of $f(L_5)$, or $15$ cycles in $Q_+^5$ where each edge is used two times. The vertex labels correspond to their binary representations as length $5$ Boolean vectors, e.g. $15$ is the vertex $(0, 1, 1, 1, 1)$.}
\end{figure}
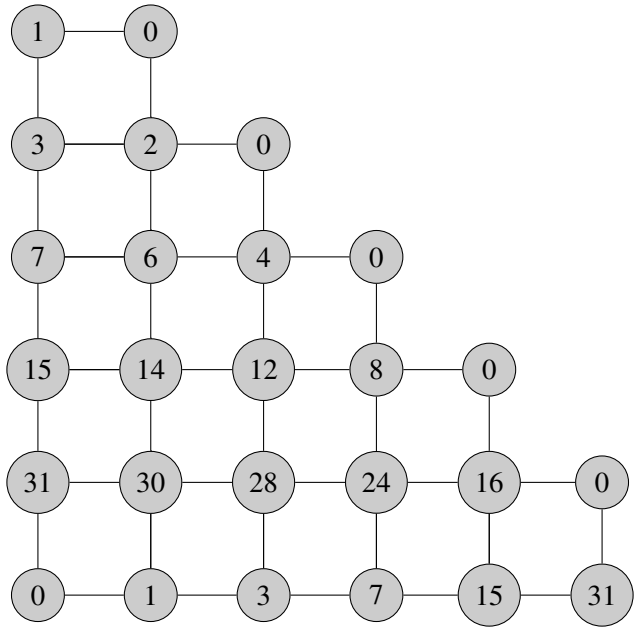

Notice that the construction above works whenever we have a 
cycle of length $2$ or $3$ modulo $4$ with distinct generators for a Cayley 
graph of $\mathbb{Z}_2^n$. We simply repeat the above, using such a
cycle of generators in place of $\{e_1, \ldots, e_n, E_n\}$. Thus, we get the following result on the existence of orthogonal signings 
for Cayley graphs of $\mathbb{Z}_2^n$.
\begin{prop}
Let $G$ be a Cayley graph of $\mathbb{Z}_2^n$ with generating 
set $S$. If there exist distinct $e_1, \ldots, e_k \in S$ 
such that $e_1 + \cdots + e_k = 0$, and 
$k \equiv 2,3 \pmod{4}$, then no orthogonal signing of $G$ exists.
\end{prop}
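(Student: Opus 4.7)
The plan is to imitate the staircase construction from the proof of Proposition \ref{Pr: No Real Signing}, with the cycle $e_1,\dots,e_k$ playing the role that $\{e_1,\dots,e_n,E_n\}$ played there. First I would relabel so that $e_k = e_1+\cdots+e_{k-1}$ (valid in $\mathbb{Z}_2^n$ since $e_1+\cdots+e_k=0$), and then build the staircase graph $L_{k-1}$ on the vertex set $\{(x,y):x+y\le k,\,0\le x,y\le k-1\}$, which contains exactly $(k-1)k/2$ unit squares. I would define $\varphi:V(L_{k-1})\to\mathbb{Z}_2^n$ by the direct analogue of the earlier formulas: $\varphi(x,0)=e_1+\cdots+e_x$, $\varphi(0,y)=e_1+\cdots+e_{k-y}$ for $y\ge 1$, and $\varphi(x,y)=\varphi(x,0)+\varphi(0,y)$ otherwise.

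Next I would check, by directly repeating the bookkeeping from the proof of Proposition \ref{Pr: No Real Signing}, the three key properties: (i) every edge of $L_{k-1}$ is sent to an edge of $G$ with generator in $\{e_1,\dots,e_k\}\subseteq S$ (horizontal steps use $e_{x+1}$, the base-row vertical step uses $e_k$, and the remaining vertical steps use $e_{k-y}$); (ii) each unit square maps to a non-degenerate $4$-cycle in $G$, since the horizontal and vertical generators in any square are distinct members of $\{e_1,\dots,e_k\}$; and (iii) every edge of $G$ in the image $\varphi(L_{k-1})$ is covered an even number of times, because the interior edges of $L_{k-1}$ lie in two unit squares and the boundary edges pair up exactly as before (axis edges pair with the opposite axis and each hypotenuse corner collapses its outgoing horizontal and vertical edges to a single image edge via $e_{x+1}=e_{k-y}$ when $x+y=k-1$).

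Because $k\equiv 2,3\pmod 4$, the count $(k-1)k/2$ is odd, so we obtain an odd family of $4$-cycles in $G$ each of whose edges appears an even number of times in total. Assuming $G$ admits an orthogonal signing, Lemma \ref{Lm: cycles} yields an edge labelling whose parity sum around every $4$-cycle is $1$; summing this identity over the $(k-1)k/2$ cycles in $\varphi(L_{k-1})$ would then give $1\pmod 2$, whereas the same double sum equals an even combination of edge labels and must be $0\pmod 2$, the desired contradiction.

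The main obstacle I anticipate is the implicit Sidon hypothesis in Lemma \ref{Lm: cycles}: the present proposition makes no such assumption on $S$, and when $S$ fails to be Sidon the opposite vertices of a staircase $4$-cycle can share more than two common neighbors in $G$, so the per-cycle parity condition need not hold. Closing this gap calls for either strengthening Lemma \ref{Lm: cycles} by a more delicate orthogonality argument that sums the constraints over the specific staircase collection and shows the extra length-two-walk contributions cancel modulo $2$, or localizing to the subgroup $\langle e_1,\dots,e_k\rangle\le\mathbb{Z}_2^n$ (where the cycle may behave Sidon-like) and arguing there. This bookkeeping of the non-Sidon coincidences is the step that requires the most care.
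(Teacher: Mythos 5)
Your proposal follows the paper's own route exactly: relabel so the given cycle $e_1,\dots,e_k$ plays the role of $\{e_1,\dots,e_n,E_n\}$, build the staircase $L_{k-1}$, map it into $G$ via the same formulas, and observe that the resulting $k(k-1)/2$ four-cycles cover every edge an even number of times, with $k(k-1)/2$ odd precisely when $k\equiv 2,3\pmod 4$. The paper's proof is just the one-line remark that this is immediate from the construction in the proof of Proposition~\ref{Pr: No Real Signing}.

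You are also right to flag the Sidon hypothesis as a genuine soft spot, and the concern applies equally to the paper's own terse proof. The step from ``odd collection of $4$-cycles, even edge multiplicities'' to a contradiction is routed through the forward direction of Lemma~\ref{Lm: cycles}: orthogonality of the rows indexed by the two opposite vertices of a $4$-cycle $C$ is used to force $\sum_{e\in C}f(e)=1$. That deduction is only valid when those two vertices have exactly two common neighbours, i.e., there is a unique $4$-cycle through them, which for a Cayley graph of $\mathbb{Z}_2^n$ is exactly the Sidon property of $S$. The present proposition does not assume $S$ is Sidon; when some $e_i+e_j$ (with $i,j\le k$) admits a second representation as a sum of two elements of $S$, orthogonality only constrains the \emph{total} over all length-two walks between the pair, not the parity of the single staircase cycle. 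Neither of your suggested fixes closes this as stated: restricting the signing of $G$ to the coset of $\langle e_1,\dots,e_k\rangle$ does not yield an orthogonal signing of the induced subgraph (inner products of truncated rows are uncontrolled), and summing the full orthogonality identities over the staircase antipodal pairs drags in $4$-cycles outside the staircase whose edges are not covered evenly. The argument does close under the extra hypothesis that each $e_i+e_j$ ($1\le i<j\le k$) has a unique representation as a sum of two distinct elements of $S$ (in particular if $S$ is Sidon, which is what is actually used for $Q_+^n$ in the preceding proof). As stated, you have identified a real gap, and it is one the paper shares rather than resolves.
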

\begin{proof}
Immediate from the construction in the previous 
proof using the edges $e_1, \ldots, e_k$.
\end{proof}
It is interesting to note that our signing of $Q_+^4$, which, as can
be easily checked,
uses only real numbers, 
answers an open question posed recently by Belardo, Cioab\u{a}, 
Koolen, and Wang \cite{BCKW}. They studied real orthogonal signings of 
regular graphs and noted that any $4$-regular graph with an orthogonal 
signing, $A_s$, can be used to produce a $5$-regular 
graph with an orthogonal signing,
\[
B=
\left[
\begin{array}{cc}
A_{S} & I \\
I & -A_{S}
\end{array}
\right],
\] 
in a way similar to Huang's matrix construction. Specifically, 
the new $5$-regular graph is formed by taking two copies of the 
$4$-regular graph and adding a perfect matching between them joining
each vertex to its copy. By the work of McKee and Smyth 
\cite{MS} on $4$-regular graphs, this method yields many 
$5$-regular graphs with such an orthogonal signed adjacency matrix. 
Belardo, Cioab\u{a}, Koolen, and Wang ask if any other such examples exist, 
namely if there are $5$-regular graphs with orthogonal signings that
are not constructed by the above method. 
Our graph $Q_+^4$ is one such example. To show it is not one
of the examples described in \cite{BCKW} note
that all the graphs there have a cut forming  a perfect matching which
splits the vertex set into two equal parts. For $Q_+^4$ to be such a 
graph it would have to have a cut of size $8$. However, this is not the 
case as can be seen from the eigenvalues of its adjacency matrix. 
This follows from the following known fact, proved, for example, 
in the remark following Lemma 2.1 in \cite{AM}. 
For completeness we include the simple proof.

\begin{prop}
If $G = (V, E)$ is a regular graph with $|V| = n$ even, 
$U \subseteq V$, $|U| = \frac{n}{2}$, and $G$ 
has eigenvalues $\lambda_1 \geq \cdots \geq \lambda_n$, 
then $e(U, V - U) \geq \frac{d - \lambda_2}{4}n$, 
where $e(U, V-U)$ denotes the number of edges with endpoints 
in $U$ and $V - U$.
\end{prop}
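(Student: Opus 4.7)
The plan is to use the standard $\pm 1$ indicator vector trick combined with the variational characterization of the second eigenvalue. I would first consider the vector $x \in \mathbb{R}^V$ defined by $x_v = 1$ if $v \in U$ and $x_v = -1$ if $v \in V \setminus U$. Since $|U| = |V \setminus U| = n/2$, we have $\sum_v x_v = 0$, so $x$ is orthogonal to the all-ones vector $\mathbf{1}$. Because $G$ is $d$-regular, $\mathbf{1}$ is an eigenvector of the adjacency matrix $A$ with eigenvalue $d = \lambda_1$, so the Rayleigh quotient bound applied in the orthogonal complement of $\mathbf{1}$ yields
\[
x^{T} A x \leq \lambda_2 \, \|x\|^2 = \lambda_2 \, n.
\]

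Next I would evaluate $x^{T}Ax$ combinatorially. Each edge $\{u,v\}$ contributes $2 x_u x_v$ to $x^{T}Ax$, and this contribution is $+2$ when both endpoints lie in $U$ or both lie in $V \setminus U$, and $-2$ when the edge crosses the cut. Writing $e(U) + e(V\setminus U) + e(U, V\setminus U) = |E| = dn/2$, this gives
\[
x^{T} A x \;=\; 2\bigl(e(U) + e(V \setminus U)\bigr) - 2 e(U, V \setminus U) \;=\; dn - 4\, e(U, V \setminus U).
\]

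Combining the two expressions yields $dn - 4 e(U, V \setminus U) \leq \lambda_2 n$, which rearranges to the desired inequality $e(U, V \setminus U) \geq \tfrac{d - \lambda_2}{4}\, n$. There is essentially no obstacle here: the proof is a three-line manipulation once one writes down the right test vector, and the only point requiring a sentence of care is recording that $\mathbf{1}$ is the eigenvector corresponding to $\lambda_1 = d$ so that orthogonality to $\mathbf{1}$ lets us invoke $\lambda_2$ rather than $\lambda_1$.
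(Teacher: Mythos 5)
Your proof is correct and follows essentially the same route as the paper: the $\pm 1$ indicator vector orthogonal to $\mathbf{1}$, the combinatorial evaluation $x^{T}Ax = dn - 4\,e(U, V\setminus U)$, and the Rayleigh quotient bound $x^{T}Ax \leq \lambda_2 n$. The paper's argument is identical in substance, differing only in notation.
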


\begin{proof}
    Let $A$ be the adjacency matrix 
of $G$ and let $f$ be the column vector indexed by $V$ defined by:
    \[
    f(u)= 
\begin{cases}
    1,& \text{if } u \in U \\
    -1,              & \text{if $u \notin U$,}
\end{cases}
\] 
where $f(u)$ is the coordinate of $f$ indexed by $u \in V$. We have,

\begin{align*}
    f^tAf = 2 \sum_{uv \in E}f(u)f(v) = 
2[-e(U, V-U) + \frac{nd}{2} - e(U, V-U)] = nd - 4e(U, V-U) \\
\end{align*}
and hence
\begin{align*}
    e(U, V-U) = \frac{nd}{4} - \frac{f^tAf}{4} \\
\end{align*}

One can bound $f^tAf$ using the second largest eigenvalue of $A$ 
since it is orthogonal to the all $1$ vector which is the eigenvector of
the largest eigenvalue. 
Thus, $f^tAf \leq \lambda_2||f||_2^2 =\lambda_2n$, and 
\begin{align*}
    e(U, V-U) \geq \frac{nd}{4} - \frac{\lambda_2n}{4} 
= \frac{d-\lambda_2}{4}n.
\end{align*} 
\end{proof}
It is well known (c.f., for example, \cite{Lo}, Problem 11.8)
that the eigenvalues of Cayley graphs of Abelian
groups can be expressed as character sums. This easily implies that
the second eigenvalue of $Q_{+}^{4}$ is $1$. Thus, by the previous 
proposition, any cut of $Q_{+}^{4}$ that splits the vertex set into 
two equal vertex classes
has at least $16$ edges. However, a perfect matching of $Q_+^4$ has 
exactly $8$ edges, so $Q_+^4$ cannot have the form described
in \cite{BCKW} in which there is such a cut.

\section{Concluding remarks}

By Theorem \ref{t11} the induced subgraph of any $d$-regular Cayley graph of $\ZZn$ on more than half the vertices has maximum 
degree at least $\sqrt d$. It is easy to see that this is not the case 
for Cayley graphs of other (abelian) groups. For example, 
if a group has an element of order $6$ then the Cayley graph in which the
only generators are such an element and its inverse is a vertex disjoint
union of $6$-cycles. This has a set of two thirds of the vertices so that
the induced subgraph on them is a matching, having maximum degree $1<
\sqrt 2$. 

The lower bound provided by Theorem \ref{t11} is  
tight for the $n$-cube,
as shown by the example described in 
\cite{CFGS}. It is therefore also tight for any Cayley graph obtained
from the $n$ cube by adding one additional generator, when $n$ is a perfect square, and is off by at most one for other $n$. For many Cayley
graphs of $\ZZn$, however, the bound is far from being tight. 
In particular for most Cayley graphs with at least $Cn$ generators,
for some absolute constant $C$, the lower bound can be
improved to linear in $d$. This follows from known
results about expanders and random Cayley graphs, as explained in
what follows.
An $(n,d,\lambda)$-graph is 
a $d$-regular graph on $n$ vertices in which the absolute value
of every eigenvalue besides the top one is at most $\lambda$.
The following result is proved in \cite{AC}. 
\begin{lemma}[\cite{AC}]
\label{AC}
Let $G = (V, E)$ be an $(n, d, \lambda)$ graph and let
$S$ be an arbitrary set of $\alpha n$ vertices of $G$. Then the average
degree $\overline{d}$ of the induced subgraph of $G$ on $S$ satisfies
\begin{align*}
    |\overline{d} - \alpha d| \leq \lambda (1-\alpha).
\end{align*}
\end{lemma}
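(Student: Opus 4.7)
The plan is to use the standard spectral / expander mixing approach: compute the number of edges inside $S$ as a quadratic form $\mathbf{1}_S^t A \mathbf{1}_S$, split $\mathbf{1}_S$ into its component along the top eigenvector (the all-ones vector $\mathbf{1}$) and the orthogonal part, and bound the contribution of the orthogonal part via $\lambda$.

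Concretely, let $A$ denote the adjacency matrix of $G$, and write the characteristic vector of $S$ as $\mathbf{1}_S = \alpha \mathbf{1} + f$, where $f \perp \mathbf{1}$. Since $\|\mathbf{1}_S\|^2 = \alpha n$ and $\|\alpha \mathbf{1}\|^2 = \alpha^2 n$, the Pythagorean identity gives $\|f\|^2 = \alpha(1-\alpha)n$. Because $G$ is $d$-regular, $A\mathbf{1} = d\mathbf{1}$, so the cross terms vanish and
\[
\mathbf{1}_S^t A \mathbf{1}_S = \alpha^2 n d + f^t A f.
\]
The number of edges inside $S$ is $\tfrac{1}{2}\mathbf{1}_S^t A \mathbf{1}_S$, and hence the average degree of the induced subgraph on $S$ is
\[
\overline{d} = \frac{\mathbf{1}_S^t A \mathbf{1}_S}{|S|} = \alpha d + \frac{f^t A f}{\alpha n}.
\]

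Since $f$ is orthogonal to the top eigenvector $\mathbf{1}$, and the remaining eigenvalues of $A$ have absolute value at most $\lambda$, the Rayleigh quotient bound yields $|f^t A f| \leq \lambda \|f\|^2 = \lambda \alpha(1-\alpha) n$. Substituting gives
\[
\bigl|\overline{d} - \alpha d\bigr| \leq \frac{\lambda \alpha(1-\alpha) n}{\alpha n} = \lambda(1-\alpha),
\]
which is the desired inequality.

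There isn't really a main obstacle: this is a routine spectral computation once one recognizes that the characteristic vector of $S$ splits cleanly along $\mathbf{1}$ and its orthogonal complement, and that the quadratic form $f^t A f$ is controlled by $\lambda$ precisely because $f$ lies in the subspace on which all eigenvalues of $A$ have absolute value at most $\lambda$. The only thing one needs to be careful with is verifying the correct normalization so that the $\alpha n$ in the denominator cancels one factor of $\alpha$ from $\|f\|^2$, producing $\lambda(1-\alpha)$ rather than $\lambda \alpha(1-\alpha)$.
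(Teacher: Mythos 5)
Your proof is correct. The paper does not actually reprove this lemma---it cites it from \cite{AC}---but your decomposition $\mathbf{1}_S = \alpha\mathbf{1} + f$ with $f\perp\mathbf{1}$ and the Rayleigh-quotient bound on $f^tAf$ is exactly the standard argument from that source, and it is consistent with the paper's remark immediately after the lemma: the lower bound $\overline{d}\geq \alpha d - \lambda(1-\alpha)$ uses only $f^tAf \geq -\lambda\|f\|^2$ (i.e.\ a bound on the most negative eigenvalue), while the upper bound uses only $f^tAf\leq \lambda\|f\|^2$ (i.e.\ a bound on the second largest eigenvalue)---both of which are visible in your computation. The normalization check you flag is the only place to slip, and you handle it correctly.
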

The proof in \cite{AC} in fact  shows that it suffices 
to assume that the smallest (most negative) eigenvalue of
$G$ is at least $-\lambda$ to conclude that the average degree
is at least $\alpha d - \lambda (1-\alpha)$. (The assumption that
the second largest eigenvalue is at most $\lambda$ provides an
upper bound of $\alpha d + \lambda (1-\alpha)$ for this average degree).

For random Cayley graphs 
of $\ZZn$ with at least some $Cn$ generators,
$\lambda$ is smaller than, say, $d/2$ with high probability, as
proved in \cite{AR}. 
\begin{prop}[\cite{AR}]
\label{p41}
For every $1 > \delta > 0$ there exists a 
$C(\delta) > 0$ such that the following holds. Let $B$ be a 
group of order $n$, let $S$ be a set of $C(\delta)\log_2(n)$ 
random elements of $B$, where $S=S^{-1}$ and let 
$G$ be the Cayley graph of $B$ 
with respect to $S$. Then, with high probability (that is, with probability
that tends to $1$ as $n$ tends to infinity) the absolute value of
every eigenvalue of $G$ but the top one is at
most $(1-\delta)|S|$.
\end{prop}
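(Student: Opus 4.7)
The plan is a concentration-plus-union-bound argument. I would start with the abelian case (which is the one actually used in this paper, since $B=\ZZn$). Here, the eigenvalues of the Cayley graph $\Gamma(B,S)$ are indexed by the characters of $B$: to each character $\chi$ corresponds the eigenvalue $\lambda_\chi = \sum_{s\in S}\chi(s)$. The trivial character gives the top eigenvalue $|S|$, so one must show that for every non-trivial $\chi$, the bound $|\lambda_\chi|\le(1-\delta)|S|$ holds with probability $1-o(1/n)$, whereupon a union bound over the $n-1$ non-trivial characters finishes the proof.

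To generate $S$ subject to $S=S^{-1}$, I would pick independent uniformly random elements $g_1,\dots,g_{|S|/2}$ of $B$ and set $S=\{g_i,g_i^{-1}:1\le i\le |S|/2\}$ (with a minor correction to handle the measure-zero event that the $g_i$'s collide or are involutions). Then for a non-trivial character $\chi$,
\[
\lambda_\chi = \sum_{i=1}^{|S|/2}\bigl(\chi(g_i)+\overline{\chi(g_i)}\bigr) = 2\sum_{i=1}^{|S|/2}\mathrm{Re}\,\chi(g_i),
\]
a sum of independent real random variables, each bounded in absolute value by $1$, with mean zero (the average of any non-trivial character over $B$ vanishes). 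Hoeffding's inequality then gives
\[
\Pr\bigl[|\lambda_\chi|\ge(1-\delta)|S|\bigr] \le 2\exp\!\bigl(-(1-\delta)^2|S|/4\bigr).
\]
With $|S|=C(\delta)\log_2 n$, choosing $C(\delta)>4\ln 2/(1-\delta)^2$ makes this probability $o(1/n)$, completing the argument in the abelian case.

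For a general, possibly non-abelian, group $B$, the characters must be replaced by irreducible unitary representations $\pi$, and the quantity to be controlled is the operator norm $\|\sum_{s\in S}\pi(s)\|$ for each non-trivial $\pi$. The scalar Hoeffding inequality must then be replaced by a matrix concentration bound applied to the independent Hermitian matrix-valued summands $\pi(g_i)+\pi(g_i^{-1})$ — for instance, the Ahlswede--Winter inequality or a non-commutative matrix Chernoff bound. This is the main technical obstacle: one must pay a $\dim\pi$ prefactor in the matrix tail estimate. The standard identity $\sum_\pi(\dim\pi)^2=n$ shows, however, that the total dimensional cost is only polynomial in $n$, so a logarithmically larger $C(\delta)$ absorbs these factors and the exponential gain from concentration again suffices. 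The union bound is then taken over the (at most $n$) irreducibles, yielding the stated conclusion.
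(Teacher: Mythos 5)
The paper does not prove this proposition; it is quoted verbatim from Alon and Roichman \cite{AR} and invoked as a black box, so there is no in-paper argument to compare against.

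That said, your sketch is correct in outline, but it is worth flagging that it is \emph{not} the proof in \cite{AR}. The original 1994 argument controls $\max_{i\geq 2}|\lambda_i|$ by estimating $\mathbb{E}\big[\mathrm{tr}\,M^{2k}\big]$ for a suitably centred matrix $M$ and a well-chosen power $k$, via a combinatorial count of closed walks (a trace/moment method); no concentration inequality appears. What you describe --- scalar Hoeffding plus a union bound over the $n-1$ nontrivial characters in the abelian case, and Ahlswede--Winter matrix concentration plus a union bound over the irreducible representations in general --- is precisely the later streamlining due to Landau and Russell. Both routes give $|S|=O_\delta(\log n)$; the matrix-Chernoff route is shorter and gives cleaner constants, while the trace method is self-contained and avoids any operator-valued concentration machinery. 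Your constant $C(\delta)>4\ln 2/(1-\delta)^2$ and the $o(1/n)$ per-character failure probability are the right numerology for the abelian Hoeffding step.

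Two small inaccuracies. First, for the group actually used in this paper, $B=\ZZn$, \emph{every} non-identity element is an involution, so $\{g_i,g_i^{-1}\}$ is a singleton and your pairing construction degenerates: one simply takes $S=\{g_1,\dots,g_m\}$ with $m=C(\delta)\log_2 n$ independent uniform elements, and then $\lambda_\chi=\sum_{i=1}^{m}\chi(g_i)$ with each $\chi(g_i)\in\{\pm1\}$, not $2\sum_{i\le m/2}\mathrm{Re}\,\chi(g_i)$. Hoeffding still applies (in fact the bound improves slightly), but the factor of $2$ and the halved index range should be dropped. Second, in the non-abelian case the dimensional prefactor $\dim\pi\le\sqrt n$ and the union bound over at most $n$ irreducibles are absorbed by taking $C(\delta)$ to be a \emph{larger constant}, not a ``logarithmically larger'' one --- the set $S$ remains of size $\Theta(\log n)$ either way.
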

Taking $\delta=1/2$ this shows that the lower bound in
Theorem \ref{t11} can be improved to $|S|/4$ for most 
Cayley graphs  of $\ZZn$ with at least $C(1/4)n$ generators.

Theorem \ref{t21} shows that any connected Cayley graph of 
$\ZZn$ with at most $n+1$ generators admits a unitary signing. 
It is not difficult to give additional examples of
Cayley graphs of this group with such a signing. In particular
the tensor product of any two graphs with a unitary signing
admits a unitary signing, as follows by the mixed product property.
Any $m$ by $m$ Hadamard matrix provides an orthogonal (and hence
unitary) signing of the complete graph with loops on $m$ vertices
in each vertex class. Tensor products of such a graph and graphs
covered by Theorem \ref{t21} are graphs admitting unitary
signings. On the other hand, there are lots of Cayley graphs of
$\ZZn$ that do not admit  such a signing. A large family of examples
are ones in which the generating set $S$ is a Sidon set with more
than  $2n+1$ elements. To prove this note that if
$S$ is a Sidon set, a unitary signing (squaring to $|S|I$) must
be the sum of $|S|$ anticommuting edge signings. This is because
if the unitary signing is 
decomposed into edge
signings, $e_1 + \cdots + e_{|S|}$, in the square of this sum
the nonzero entries of any product of two matrices  $e_i,e_j$
can intersect those of $e_p,e_q$ only if $\{i,j\}=\{p,q\}$. Thus
the only way for the sum to square to $|S|I$ is for
all of the edge signings to anticommute with each other.

It is known (c.f. \cite{Hr}) that the maximum possible number of complex
invertible anticommuting $N$ by $N$  matrices is
$2 \log_2 N+1$. In our case $N=2^n$ and thus we cannot have more than
$2n+1$ anticommuting matrices corresponding to edge signing (note that
each such matrix is invertible, as its square is the identity).
There are, however, much larger Sidon sets in $\ZZn$, 
of size $2^{n/2}$
(given by the columns of the parity check matrix of a BCH code with designed
distance $5$). Any Cayley graph of $\ZZn$ with a generating set which is a 
subset of more than $2n+1$ elements of such a large Sidon set cannot have
a unitary signing.

It will be interesting to understand better which (Cayley or non-Cayley) 
graphs admit unitary signings. It also seems interesting to investigate
the possible analogs of Theorem \ref{t11} for Cayley graphs of other groups.


\section*{Acknowledgments} 
We thank two anonymous reviewers for helpful suggestions
and comments.

\bibliographystyle{amsplain}

\begin{thebibliography}{99}

\bibitem{AC}
N. Alon and F. Chung,
\newblock Explicit construction of linear sized tolerant networks,
\newblock {\em Discrete Math.} 72 (1988), 15--19.

\bibitem{AM}
N. Alon and V. D. Milman, 
\newblock {$\lambda_1,$} isoperimetric inequalities for graphs, and
superconcentrators,
\newblock {\em J. Combin. Theory, Ser. B} 38 (1985), 73--88.

\bibitem{AR}
N. Alon and Y. Roichman, 
\newblock Random Cayley graphs and expanders,
\newblock {\em Random Struct Algorithms} 5 (1994), 271--284.

\bibitem{BCKW}
F. Belardo, S. M.  Cioab\u{a}, J. H. Koolen and J. Wang, 
\newblock Open problems in the spectral theory of signed graphs,
\newblock {\em The Art of Discrete and Applied Mathematics} 1 (2018), P2.10

\bibitem{BCN}
\newblock A.E. Brouwer, A.M. Cohen and A. Neumaier, 
{\em Distance-Regular Graphs}, Springer, 1989.

\bibitem{CFGS}
F. Chung, Z. F{\"u}redi, R. Graham and P. Seymour, 
\newblock On induced subgraphs of the cube,
\newblock {\em J. Combin. Theory, Ser. A} 49 (1988), 180--187.

\bibitem{GL}
C. Gotsman and N. Linial, 
\newblock The equivalence of two problems on the cube,
\newblock {\em J. Combin. Theory, Ser. A} 61 (1992), 142--146.

\bibitem{Hr}
P. Hrube\u{s}, 
\newblock On families of anticommuting matrices,
\newblock {\em Linear Algebra Appl.} 493 (2016), 494--507.

\bibitem{Hu}
H. Huang, 
\newblock Induced subgraphs of hypercubes and a proof of the sensitivity conjecture,
\newblock {\em Ann. Math.} 190 (2019), 949--955.

\bibitem{Hw}
S. G. Hwang, 
\newblock Cauchy's interlace theorem for eigenvalues of hermitian matrices,
\newblock {\em Am. Math. Mon.} 111 (2004), 157--159.

\bibitem{Lo}
L. Lov\'{a}sz, 
{\em Combinatorial Problems and Exercises},
North-Holland Publishing Co., Amsterdam-New York, 1979.

\bibitem{MS}
J. McKee and C. Smyth, 
\newblock Integer symmetric matrices having all their eigenvalues in the
interval [-2,2],
\newblock {\em J. Algebra} 317 (2007), 260--290.

\bibitem{MPRR}
L. Mancinska, I. Pivotto, D. E. Roberson and G. F. Royle, 
\newblock Cores of cubelike graphs, 
\newblock {\em European J. Combin.} 87 (2020), 103092, 16 pp.

\bibitem{NS}
N. Nisan, and M. Szegedy, 
\newblock On the degree of boolean functions as real polynomials,
\newblock {\em Computational Complexity} 4 (1994), 301--313.



\end{thebibliography}


\begin{aicauthors}
\begin{authorinfo}[nalon]
  Noga Alon\\
  Department of Mathematics,\\
  Princeton University,\\
  Princeton, USA \\
  and\\
  Schools of Mathematics 
  and Computer Science,\\
  Tel Aviv University, \\
  Tel Aviv, Israel. \\
  nogaa\imageat{}tau\imagedot{}ac\imagedot{}il \\
\end{authorinfo}
\begin{authorinfo}[kzheng]
  Kai Zheng\\
  Department of Mathematics,\\
  Princeton University,\\
  Princeton, USA. \\
  kzheng\imageat{}princeton\imagedot{}edu \\
\end{authorinfo}
\end{aicauthors}

\end{document}